\newtheorem{theorem}{\bf Theorem}
\newtheorem{assumption}{\bf Assumption}
\newtheorem{lemma}[theorem]{\bf Lemma}
\newcommand{\bepsilon}{\mbox{\boldmath{$\varepsilon$}}}
\def\be{\begin{equation}}
\def\ee{\end{equation}}
\def\ben{\begin{eqnarray}}
\def\een{\end{eqnarray}}
\renewcommand{\div}{\rm div\,}
\newcommand{\bR}{\mathbb{R}}
\newcommand{\bH}{\mathbb{H}}
\newcommand{\bQ}{\mathbf{Q}}
\newcommand{\bL}{\mathbb{L}}
\newcommand{\bv}{\mathbf{v}}
\newcommand{\bw}{\mathbf{w}}
\newcommand{\bphi}{\mbox{\boldmath{$\phi$}}}
\newcommand{\bpsi}{\mbox{\boldmath{$\psi$}}}
\newcommand{\bmu}{\mbox{\boldmath{$\mu$}}}
\newcommand{\bzeta}{\mbox{\boldmath{$\zeta$}}}
\newcommand{\boeta}{\mbox{\boldmath{$\eta$}}}
\newcommand{\bPi}{\mbox{\boldmath{$\Pi$}}}
\newcommand{\CT}{\mathcal{T}}
\newcommand{\CC}{\mathcal{C}}
\newcommand{\cS}{\mathcal{S}}
\newcommand{\CE}{\mathcal{E}}
\newcommand{\CB}{\mathcal{B}}
\newcommand{\CN}{\mathcal{N}}
\newcommand{\cI}{\mathcal{I}}
\newcommand{\bx}{\mathbf{x}}
\newcommand{\bu}{\mathbf{u}}
\newcommand{\bN}{\mathbb{N}}
\newcommand{\bV}{\mathbf{V}}
\newcommand{\bM}{\mathbf{M}}
\newcommand{\tB}{\mathtt{B}}
\newcommand{\tD}{\mathtt{D}}
\newcommand{\tA}{\mathtt{A}}
\title{A  simple finite element method for 
Reissner--Mindlin plate equations using the Crouzeix-Raviart element 
and the standard linear finite element}
\author{Bishnu P.~Lamichhane
\thanks{School of Mathematical \& Physical Sciences,
Mathematics Building - V127,
University of Newcastle,
University Drive,
Callaghan, NSW 2308, Australia, 
 {\tt Bishnu.Lamichhane@newcastle.edu.au}}}
\begin{document}
\maketitle

\begin{abstract}
We present a simple finite element method for the 
discretization of Reissner--Mindlin plate equations.
The finite element method is 
based on using the nonconforming Crouzeix-Raviart finite element space 
for the transverse displacement, and the standard linear 
finite element space for the rotation of the transverse normal vector.
We also present two examples for 
the discrete Lagrange multiplier space for the proposed formulation. 

\end{abstract}

{\bf Key words}
Reissner--Mindlin plate, 
Lagrange multiplier, biorthogonal system, Crouzeix-Raviart element, 
a priori error estimates \\

{\bf AMS subject classification}.
65N30, 74K20

 \section{Introduction}
 It is a challenge to 
 design a  simple finite element scheme for  Reissner--Mindlin plate equations, 
 which does not lock when the plate thickness becomes close to zero.
 A standard discretization normally does not provide 
 a uniform convergence with respect to 
 the plate thickness.  This problem is often referred to as {\em locking}. 
 Many finite element 
 techniques are developed over the past twenty years 
 to avoid {\em locking} and obtain a uniform convergence 
 with respect to the plate thickness 
 \cite{AF89,BF91,AB93,CL95,Lov96,AF97,CS98,Bra96,FT00,Bra01,ACC02,Lo05}.
Most of these finite element methods are either too 
complicated or too expensive to implement.

In this paper, we present a very simple finite element method 
for Reissner--Mindlin plate equations providing a uniform 
convergence with respect to the plate thickness. 
We consider both {\em simply supported} and  
{\em clamped} boundary condition. 
Previously, a simple finite element method for 
Reissner--Mindlin plate equations is presented  \cite{Lam13c}
 for the case of {\em clamped} boundary condition, where 
 we have enriched 
 the standard linear finite element space with element-wise 
 bubble functions for  the approximation of the transverse displacement 
 to ensure the stability of the system. In this paper, 
 we show that the stability is ensured if we 
 use the nonconforming  Crouzeix-Raviart finite element space to 
 approximate the transverse displacement, whereas 
 other variables are discretized as in \cite{Lam13c}. That means 
 each component of the rotation of the transverse normal vector is 
 approximated by the standard linear finite element, whereas 
 we present two examples of the discrete Lagrange multiplier space.  The first 
one is based on the standard linear finite element 
space, whereas the second one is 
based on a dual Lagrange multiplier space 
proposed in \cite{BWHabil,KLP01}. 
The main advantage of 
using a dual Lagrange multiplier space 
is that it allows an efficient static 
condensation of the degrees of freedom 
associated with the Lagrange multiplier space. 
This leads to a positive-definite system. 
An iterative solver performs better for 
a positive-definite system than for a saddle point 
system. Hence the dual 
Lagrange multiplier space leads to 
a more efficient numerical scheme. 
The case of {\em clamped} boundary condition 
  is treated by using the idea of 
  mortar finite elements for the boundary 
  modification \cite{BD98, Lam13c}.

We now want to point out some links of this present work with some previously 
presented nonconforming finite element schemes for Reissner-Mindlin plate equations
 \cite{AF89,AF97,Lo05}. 
For example, the finite element scheme presented in \cite{AF89} 
uses the nonconforming Crouzeix-Raviart finite element for the transverse displacement, but 
each component of the rotation of the transverse normal vector is approximated by 
the standard linear finite element space enriched 
with element-wise bubble functions, and the Lagrange multiplier space 
is discretized by the space of piecewise constant functions. 
We do not need to use bubble functions in our formulation, and 
hence our finite element method is more efficient than 
this  finite element scheme. 

The finite element scheme in \cite{Lo05} uses 
the nonconforming Crouzeix-Raviart finite element  for 
the transverse displacement and  each component of the rotation of the transverse normal 
vector, whereas the Lagrange multiplier is approximated by 
the space of piecewise constant functions. 
Since the Crouzeix-Raviart element is 
used for each component of the rotation of the transverse normal vector, 
a stabilization is introduced in order to achieve 
Korn's inequality. Since we use 
a conforming approach for the rotation of the transverse normal vector 
we do not need the stabilization for our finite element 
scheme.

The rest of the 
paper is planned as follows. 
The next section briefly recalls the Reissner--Mindlin plate 
equations in a modified form as given in \cite{AB93}. 
We describe our finite element method and present 
assumptions on the discrete Lagrange multiplier space in 
Section \ref{sec:fe}. Section \ref{sec:ssbc} 
is devoted to the presentation of 
two examples of the discrete Lagrange multiplier space 
for the {\em simply supported boundary condition}, and 
we show the modification of the discrete Lagrange multiplier space 
for the {\em clamped boundary condition} in 
Section \ref{sec:clbc}. 
Finally, we apply static condensation of  the Lagrange multiplier in 
Section \ref{sec:pd} before drawing 
a conclusion in the last section.

\section{A mixed formulation of Reissner--Mindlin plate}
Let $\Omega\subset \bR^2$ be a bounded region with 
polygonal  boundary. 
We need the following Sobolev spaces for the variational 
formulation of the Reissner--Mindlin plate with the plate 
thickness $t$: 
\[\bH^1(\Omega) = [H^1(\Omega)]^2,\quad 
\bH^1_0(\Omega) = [H^1_0(\Omega)]^2,\quad \text{and}\quad 
\bL^2(\Omega) = [L^2(\Omega)]^2.\]
Since the regularity of the shear stress depends on 
the plate thickness $t$, we  use the Hilbert space for the 
shear stress depending on $t$. 
Let $(\bH_0^1(\Omega))'$ and $(\bH^1(\Omega))'$ 
be the dual spaces of $\bH_0^1(\Omega)$ and $\bH^1(\Omega)$, 
 respectively. Now we define the Hilbert space for the shear stress as 
\[ \bM_t:= \begin{cases}
\{\bzeta \in (\bH_0^1(\Omega))' :\, \||\bzeta\||_t  < \infty\},\quad 
\text{for the clamped boundary},\\
\{\bzeta \in (\bH^1(\Omega))' :\, \||\bzeta\||_t  < \infty\},\quad 
\text{for the simply supported boundary},
\end{cases}\]
where the norm $\||\cdot\||_t$ is defined as 
\[  \||\bzeta\||_t = \begin{cases}
\|\bzeta\|_{(\bH_0^1(\Omega))' } + \|\nabla\cdot\bzeta \|_{H^{-1}(\Omega)}  
+ t \|\bzeta \|_{L^2(\Omega)}\quad 
\text{for the clamped boundary},\\
\|\bzeta\|_{(\bH^1(\Omega))' } + \|\nabla\cdot\bzeta \|_{H^{-1}(\Omega)}  
+ t \|\bzeta \|_{L^2(\Omega)}\quad 
\text{for the simply supported boundary}.
\end{cases}
\]
In order to get a unified framework for the 
clamped and simply supported boundary  of 
Reissner-Mindlin plate we define the space $\bV$ for the transverse displacement 
as 
\[ \bV:= \begin{cases}
\bH_0^1(\Omega), \quad 
\text{for the clamped boundary},\\
\bH^1(\Omega),\quad 
\text{for the simply supported boundary}.
\end{cases}
\]
We consider the following modified mixed formulation of 
Reissner--Mindlin plate equations 
proposed in \cite{AB93}.  The mixed formulation 
is to find $(\bphi,u,\bzeta) \in 
\bV \times H^1_0(\Omega)\times \bM_t$ such that 
\begin{equation} \label{rmeqn}
\begin{array}{lccccccc}
 a(\bphi,u;\bpsi,v)&+b(\bpsi,v;\bzeta)&=&\ell(v),\quad &(\bpsi,v) 
&\in &\bV\times H^1_0(\Omega), &\\
b(\bphi,u;\boeta) &- \frac{t^2}{\lambda(1-t^2)} (\bzeta,\boeta)& =& 0, \quad 
&\boeta& \in& \bM_t,&
\end{array}
\end{equation} 
where $\lambda$ is a material constant depending on Young's modulus 
$E$ and Poisson ratio $\nu$, and 
\begin{eqnarray*}
a(\bphi,u;\bpsi,v) &=& \int_{\Omega} \CC \bepsilon(\bphi):\bepsilon(\bpsi)\,d\bx +\lambda \int_{\Omega} (\bphi-\nabla u)\cdot(\bpsi-\nabla v)\,d\bx, \\
b(\bpsi,v;\boeta)& =& \int_{\Omega} (\bpsi-\nabla v)\cdot\boeta\,d\bx,\quad 
\ell(v) = \int_{\Omega} g\,v\,d\bx.
\end{eqnarray*}
Here  $g$ is the body force, 
$u$ is the transverse displacement or normal deflection 
of the mid-plane section of $ \Omega$, $\bphi$ is the rotation of the 
transverse normal vector,  $\bzeta$ is the Lagrange 
multiplier, $\CC$ is the fourth order tensor, and 
$\bepsilon(\bphi)$ is the symmetric part of the gradient of $\bphi$. 
In fact, $\bzeta$ is the scaled shear 
stress defined by 
\[ \bzeta = \frac{\lambda (1-t^2)} {t^{2}} \left(\bphi - \nabla u\right).\]

\section{A finite element discretization}\label{sec:fe}

We consider a quasi-uniform triangulation $\CT_h$ of the 
polygonal domain $\Omega$, where $\CT_h$
consists of 
triangles where $h$ denotes the mesh-size. 
 Note that $\CT_h$ denotes the set
of elements.  For an element $T \in \CT_h$, let $P_n(T)$ be the 
set of all polynominals of degree less than or equal to $ n \in
\bN\cup \{0\}$ in $T$.
Let $\{\bx_i\}_{i=1}^N$ be the set of all vertices of 
the triangulation $\CT_h$, and $\CN_h=\{i\}_{i=1}^N$. 
Let $\CE_h$ be the set of all edges of elements in $\CT_h$, 
and $[v_h]_e$ the jump of the function $v_h$ across the edge
$e$. 

We consider a nonconforming 
finite element space $S_h$ for the 
transverse displacement, where 
the continuity of a function $v_h\in S_h$ 
across an edge  $e\in \CE_h$ 
will be enforced according to 
\[ J_e(v_h) := \int_{e}[v_h]_e \,d\sigma =0.\]
This is the standard nonconforming 
Crouzeix-Raviart finite elment space $S_h$ \cite{CR73} defined as 
\[ S_h := \{v_h \in L^2(\Omega):\, v_h|_{K} \in P_1(K),
\; 
K \in \CT_h, \; J_e(v_h) =0,\; e \in \CE_h\}.\]

The finite element basis functions of $S_h$ are associated 
with the mid-points of the edges of triangles.
To impose the homogeneous Dirichlet boundary condition on $\Gamma$ we 
 define $W_h$ as a subset of $S_h$ where 
\[ W_h : = \{ v_h \in S_h:\, \int_{e} v_h\,d\sigma =0,
e \in \CE_h \cap \Gamma\}.\]

As $W_h \not\subset H_0^1(\Omega)$, we cannot use 
the standard $H^1$-norm for an element in $W_h$. 
So we define a broken norm on $W_h$ as 
\[ \|v_h\|_{1,h} : =\sqrt{ \sum_{T \in \CT_h}
  \|v_h\|^2_{1,T}},\quad v_h \in W_h,
\]
and an element-wise defined gradient $\nabla_h$ and divergence $ \nabla_h \cdot$ as 
\[ 
\nabla_h u_h|_{T}  = \nabla (u_h|_{T}),\quad\text{and}\quad  
\nabla_h \cdot u_h|_{T}  = \nabla \cdot (u_h|_{T}),\quad\text{on}\quad 
T,\quad T \in \CT_h.\]
We note that
the standard linear finite element space 
enriched with element-wise defined bubble functions 
to approximate the transverse displacement is used in \cite{AB93,Lam13c}, 
which is a conforming approach. Our approach 
here is nonconforming for the transverse displacement 
since $W_h \not\subset H_0^1(\Omega)$.

Each component of the rotation of the 
 transverse normal vector is 
discretized by using the standard linear finite element space 
\[ K_h :=\{ q_h \in H^1(\Omega):\, 
q_h|_{K} = P_1(K), \; K \in \CT_h\},\quad 
K^0_h := K_h \cap H^1_0(\Omega).\]

The finite element space for the rotation of the 
 transverse normal vector is 
\[ \bV_h:= \begin{cases}
[K_h]^2,\quad 
\text{for the clamped boundary},\\
[K^0_h]^2, \quad 
\text{for the simply supported boundary}.
\end{cases}\]

\subsection{Discrete Lagrange multiplier spaces} 

Let $M_h\subset L^2(\Omega)$ be a piecewise 
polynomial space with respect to the mesh $\CT_h$  
used to discretize each component of the Lagrange multiplier $\bzeta \in \bM_t$. 
The discrete Lagrange multiplier space is defined as 
$\bM_h := [M_h]^2$. 
The Lagrange multiplier $\bzeta\in \bM_t$ is the shear stress, and 
the discrete space for the shear stress should have the approximation property 
in the $L^2$-norm. Hence we need
\[
\inf_{\mu_h \in M_h}\|v-\mu_h\|_{L^2(\Omega)}\leq 
Ch |v|_{1,\Omega},\quad v \in H^1(\Omega).
\]

\begin{figure}[!ht]
\begin{center}
 \includegraphics[width =0.68\textwidth]{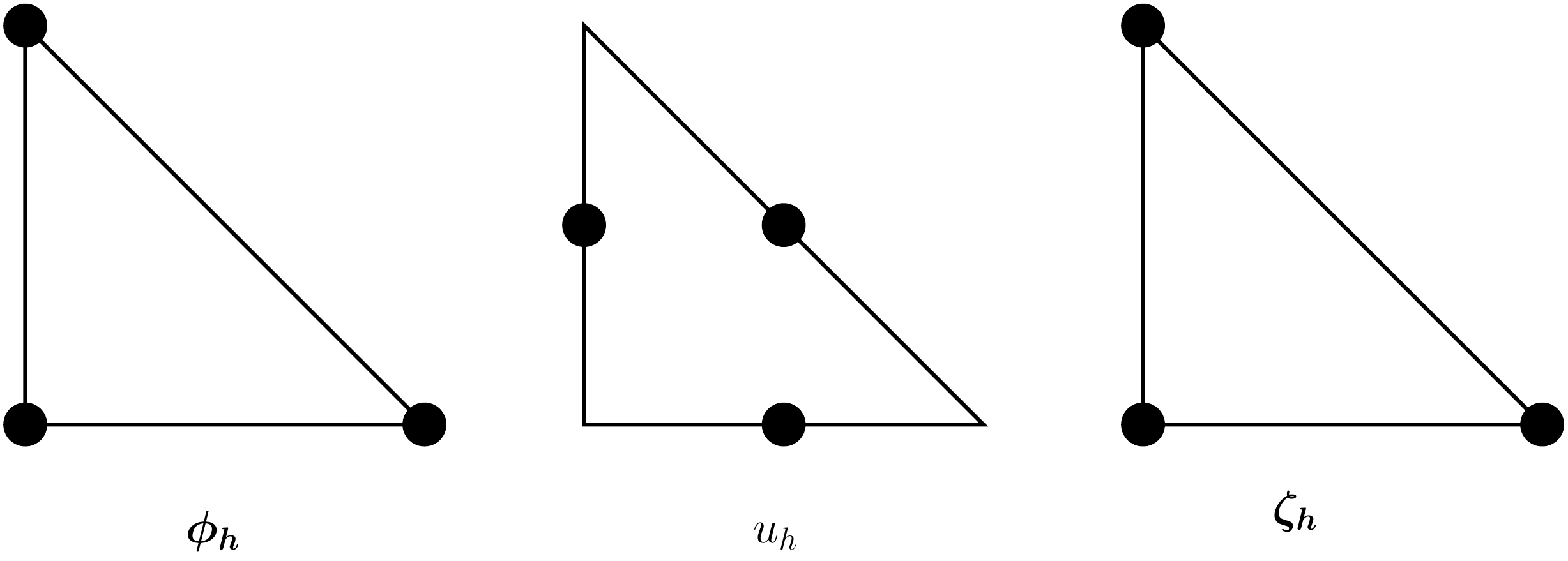}
 \caption{Degrees of freedom for the finite element spaces } 
\end{center}
 \label{SI}
\end{figure}

The finite element formulation is to 
find $(\bphi_h,u_h,\bzeta_h) \in \bV_h \times W_h \times \bM_h$ 
such that  
\begin{equation}\label{dsaddle}
\begin{array}{ccccccc}
a_h(\bphi_h,u_h;\bpsi_h,v_h)&+
b_h(\bpsi_h,v_h;\bzeta_h)&=&\ell(v_h),\quad &(\bpsi_h,v_h) 
&\in &\bV_h \times W_h, \\
b_h(\bphi_h,u_h;\boeta_h) &- \frac{t^2}{\lambda(1-t^2)} (\bzeta_h,\boeta_h)& =& 0, \quad 
&\boeta_h& \in& \bM_h,
\end{array}
\end{equation}
where 
\begin{eqnarray*}
a_h(\bphi_h,u_h;\bpsi_h,v_h) &=& \int_{\Omega} \CC \bepsilon(\bphi_h):\bepsilon(\bpsi_h)\,d\bx +
\lambda \int_{\Omega} (\bphi_h-\nabla_h u_h)\cdot(\bpsi_h-\nabla_h v)\,d\bx, \\
b_h(\bpsi_h,v_h;\boeta_h)& =& \int_{\Omega} (\bpsi_h-\nabla_h v_h)\cdot\boeta_h\,d\bx.
\end{eqnarray*}

In order to get stability and optimality of our finite element scheme we 
impose the following assumptions on the discrete Lagrange multiplier space 
as in \cite{AB93,Lam13c}.
\begin{assumption}\label{A1A2}
\begin{itemize} 
\item[\ref{A1A2}(i)] $\dim \bM_h = \dim \bV_h$.
\item[\ref{A1A2}(ii)] There is a constant $\beta>0$ independent of 
the triangulation $\CT_h$ such that 
\begin{eqnarray}
\|\bphi_h\|_{L^2(\Omega)} \leq \beta \sup_{\bmu_h \in M_h \backslash\{0\}} 
\frac{\int_{\Omega} \bmu_h\cdot\bphi_h\,d\bx} {\|\bmu_h\|_{L^2(\Omega)}},
\quad \bphi_h \in \bV_h.
\end{eqnarray}
\item[\ref{A1A2}(iii)] The space $\bM_h$ has the approximation property:
\begin{equation}
\inf_{\bmu_h \in \bM_h}\|\bmu-\bmu_h\|_{L^2(\Omega)}\leq 
Ch |\bmu|_{1,\Omega},\quad \bmu \in \bH^1(\Omega).
\end{equation}
\item[\ref{A1A2}(iv)]
There exist two bounded linear projectors  
$\bQ_h :\bH_0^1(\Omega) \rightarrow \bV_h$ 
and 
$\Pi_h : H_0^1(\Omega) \rightarrow W_h$ 
for which 
\[ b_h( Q_h\bpsi,\Pi_h v;\boeta_h) = b(\bpsi,v;\boeta_h),\quad 
\boeta_h \in \bM_h.\]
\end{itemize}
\end{assumption} 
If these assumptions are satisfied, we obtain 
an optimal error estimate for the finite element approximation, 
see \cite{AB93}.  
We immediately see that 
the bilinear forms 
$a_h(\cdot,\cdot)$, $b_h(\cdot,\cdot)$ and 
the linear form $\ell(\cdot)$ are continuous with respect to the 
spaces $\bV_h \times W_h$, $\bV_h\times W_h \times \bM_h$ and  
$W_h$, respectively, where  the broken norm 
$\|\cdot\|_{1,h}$ is used for functions in $W_h$. 
Similarly, the coercivity of the bilinear form $a_h(\cdot,\cdot)$ 
over the space $\bV_h \times W_h$ also holds 
due to the Korn's and Poincar\'e inequality. Note that 
the use of Crouzeix-Raviart element does not cause problem 
here as it is only used to discretize the transverse displacement.
Then under above assumptions we have the following theorem from 
the theory of saddle point problems \cite{BF91,Bra01,AB93}.
The proof of the following theorem is 
quite similar to the convergence result in \cite{Lo05}.
\begin{theorem}\label{th0}
Let $(\bphi,u,\bzeta) \in \bV \times W \times \bM$   be the solution 
\eqref{rmeqn} and $(\bphi_h,u_h,\bzeta_h) \in \bV_h \times W_h \times \bM_h$ 
 of \eqref{dsaddle}.  Then under Assumptions \ref{A1A2}(i)--(iv) 
 there exists a constant $C$ independent of $t$ and $h$ such that 
\begin{eqnarray*}
 \|\bphi-\bphi_h\|_{1,\Omega} + \|u - u_h \|_{1,h} 
+ \||\bzeta -\bzeta_h\||_t  \leq \\
C h \left(\|\bphi\|_{H^2(\Omega)} + \|u\|_{H^2(\Omega)}  + \|\bzeta\|_{H(\div,\Omega)}+ 
t\|\bzeta\|_{1,\Omega}\right),
\end{eqnarray*}
where, we assume that  $\bphi \in \bH^2(\Omega)$, 
$u \in H^2(\Omega)$ and $\bzeta\in \bH^1(\Omega)$.
\end{theorem}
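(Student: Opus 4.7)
The plan is to proceed along the lines of the mixed-method analysis of Arnold--Brezzi \cite{AB93} as adapted by Lovadina \cite{Lo05}, using Assumption \ref{A1A2} to supply exactly the Fortin-type ingredients that bypass the two main difficulties: the degeneration of the system as $t \to 0$ and the nonconformity introduced by the Crouzeix--Raviart space $W_h$. Abstractly, the scheme \eqref{dsaddle} fits the Brezzi framework provided one works with the $t$-dependent norm $\||\cdot\||_t$ on the multiplier, and under Assumption \ref{A1A2} the discrete spaces inherit a uniform inf-sup with respect to both $h$ and $t$.

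First I would derive a Galerkin-type error equation by subtracting \eqref{dsaddle} from \eqref{rmeqn} tested on $(\bpsi_h, v_h, \boeta_h) \in \bV_h \times W_h \times \bM_h$. Because $\bV_h \subset \bV$ but $W_h \not\subset H_0^1(\Omega)$, the subtraction produces a nonconforming consistency residual concentrated on edges, essentially of the form $\sum_{e\in\CE_h} \int_e [v_h]_e \, (\lambda(\bphi-\nabla u) - \bzeta)\cdot \bn\, d\sigma$. Since functions in $W_h$ have vanishing integral jump across every edge, subtracting the edgewise mean of the normal trace inside each integral yields the standard Crouzeix--Raviart first-order consistency bound controlled by $h\bigl(\|\bphi\|_{H^2(\Omega)} + \|u\|_{H^2(\Omega)} + \|\bzeta\|_{H(\div,\Omega)}\bigr)$.

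Next I would insert the Fortin projections from Assumption \ref{A1A2}(iv): write the errors as $(\bphi - \bQ_h\bphi) + (\bQ_h\bphi - \bphi_h)$ and similarly for $u$. The identity $b_h(\bQ_h\bpsi, \Pi_h v; \boeta_h) = b(\bpsi, v; \boeta_h)$ kills the mixed term in the error equation, so the discrete differences $\bQ_h\bphi - \bphi_h$, $\Pi_h u - u_h$, and $R_h\bzeta - \bzeta_h$ (with $R_h$ the $L^2$-projector onto $\bM_h$) satisfy a saddle-point system driven only by the interpolation errors plus the consistency residual. Combining coercivity of $a_h$ on $\bV_h \times W_h$ (Korn plus Poincar\'e, as noted in the text) with the inf-sup for $b_h$ furnished by Assumptions \ref{A1A2}(ii) and \ref{A1A2}(iv), Brezzi's theory then bounds the discrete error in $\|\cdot\|_{1,\Omega} + \|\cdot\|_{1,h} + \|\cdot\|_{(\bV)'} + t\|\cdot\|_{L^2(\Omega)}$. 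The missing $\|\nabla\cdot(\bzeta - \bzeta_h)\|_{H^{-1}(\Omega)}$ piece is recovered from the first equation of \eqref{rmeqn}, where $\nabla\cdot\bzeta$ equals $-g$ minus a divergence depending on $\bphi$ weakly in $H^{-1}$, so that the already-estimated $H^1$-error in $\bphi$ controls the divergence error. Assembling these bounds with Assumption \ref{A1A2}(iii) for $\bzeta$ and the $O(h)$ approximation estimates for $\bQ_h$ and $\Pi_h$ produces the stated inequality.

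The hard part will be tracking $t$-uniformity through the Brezzi step. The danger is that inverting the coupling between the second equation of \eqref{dsaddle}, which contributes only $t^2\|\bzeta_h\|^2_{L^2}$, and the inf-sup could introduce a factor of $t^{-1}$. Assumption \ref{A1A2}(iv) is precisely what avoids this, since it allows testing with $\boeta_h \in \bM_h$ directly without passing through the $\|\cdot\|_{(\bV)'}$-norm; a careful split of the multiplier error into a dual-norm component and an $L^2$-component weighted by $t$, each estimated from the equation adapted to it, keeps every constant independent of $t$ and reduces the argument to the bookkeeping already carried out in \cite{Lo05}.
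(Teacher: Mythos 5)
Your proposal is correct in outline and follows essentially the same route the paper relies on: the paper gives no written proof of Theorem \ref{th0}, deferring instead to the saddle-point theory of \cite{BF91,Bra01,AB93} and the convergence argument of \cite{Lo05}, and your sketch (Galerkin error equation with the Crouzeix--Raviart edge consistency residual, the Fortin operators of Assumption \ref{A1A2}(iv), coercivity of $a_h$, and the $t$-weighted splitting of the multiplier norm) reproduces exactly the ingredients of that cited argument.
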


\section{Simply supported boundary condition} \label{sec:ssbc}
We first consider the case of simply supported boundary condition. 
We consider two examples of the discrete Lagrange multiplier space 
 satisfying Assumptions \ref{A1A2}(i)--(iv). 
\subsection{First example for $\bM_h$}
Let $\bM_h:= [K_h]^2$. 
We can see that this example satisfies Assumptions \ref{A1A2}(i)--(iii). 
Now we prove that it also satisfies Assumptions \ref{A1A2}(iv). 
\begin{theorem}\label{th1}
There exist two bounded linear projectors 
$\bQ_h :\bH^1(\Omega) \rightarrow \bV_h$ 
and 
$\Pi_h : H_0^1(\Omega) \rightarrow W_h$ 
for which 
\begin{equation}\label{op} 
b( \bQ_h\bpsi,\Pi_h v;\boeta_h) = b(\bpsi,v;\boeta_h),\quad 
\boeta_h \in \bM_h.\end{equation}
\end{theorem}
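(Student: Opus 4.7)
The plan is to exploit the linearity of $b(\cdot,\cdot;\cdot)$ in its first two arguments. Writing
\[
b(\bpsi,v;\boeta) = \int_\Omega \bpsi\cdot\boeta\,d\bx - \int_\Omega \nabla v\cdot\boeta\,d\bx,
\]
and substituting first $v=0$ and then $\bpsi=0$ into \eqref{op}, one sees that the identity decouples into the two independent orthogonality conditions
\[
\int_\Omega (\bQ_h\bpsi-\bpsi)\cdot\boeta_h\,d\bx=0, \qquad \int_\Omega (\nabla_h\Pi_h v-\nabla v)\cdot\boeta_h\,d\bx=0,
\]
both to hold for every $\boeta_h\in\bM_h$. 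The two projectors $\bQ_h$ and $\Pi_h$ can therefore be constructed separately.

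For $\bQ_h$, since in this example $\bV_h=[K_h]^2$ coincides with $\bM_h$, the natural choice is the $L^2$-orthogonal projection onto $\bV_h$; it satisfies the first orthogonality by definition, is a true projector on $\bV_h$, and is $\bH^1$-stable on the quasi-uniform mesh $\CT_h$, so it restricts to a bounded linear map $\bH^1(\Omega)\to\bV_h$.

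For $\Pi_h$, I would start with the standard Crouzeix--Raviart interpolant $\Pi_h^{CR}v\in W_h$ given by $\Pi_h^{CR}v(m_e)=|e|^{-1}\int_e v\,d\sigma$ on interior edges and by zero on $\CE_h\cap\Gamma$. Using the edge-average matching $\int_e\Pi_h^{CR}v\,d\sigma=\int_e v\,d\sigma$ and elementwise integration by parts, one obtains the key property $\int_T\nabla\Pi_h^{CR}v\,d\bx=\int_T\nabla v\,d\bx$ for every $T\in\CT_h$, which already enforces the second orthogonality against every piecewise-constant $\boeta_h$. The residual against a general $\boeta_h\in[K_h]^2$ then reduces to a functional depending only on the elementwise mean-zero part $\boeta_h-P_0\boeta_h$, and I would cancel it by a correction $w_h\in W_h$, setting $\Pi_h v:=\Pi_h^{CR}v+w_h$.

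The main obstacle is the construction of this correction. The map $w_h\mapsto\int_\Omega\nabla_h w_h\cdot\boeta_h\,d\bx$ is controlled only by $P_0\boeta_h$ because $\nabla_h w_h$ is piecewise constant, so compatibility with the residual demands a careful Brezzi-type analysis on the pair $(W_h,\bM_h)$. I would formulate the existence of $w_h$ as the well-posedness of an auxiliary saddle-point problem on $W_h\times\bM_h$ and verify the corresponding inf-sup by a local, element-by-element construction of test functions, at the same time ensuring $\|w_h\|_{1,h}\le C\|v\|_{1,\Omega}$ uniformly in $h$ and that the correction vanishes on $W_h$ so that $\Pi_h$ is a genuine bounded linear projector.
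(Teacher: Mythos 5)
Your treatment of $\bQ_h$ coincides with the paper's: the decoupling of \eqref{op} into the two orthogonality conditions is exactly how the proof is organised, and the paper also takes $\bQ_h$ to be the $L^2$-orthogonal projection onto $\bV_h$. The problem is the construction of $\Pi_h$, where your plan has a genuine gap that your own observation exposes. Since every $w_h\in W_h$ is piecewise linear, $\nabla_h w_h$ is piecewise constant, so
\[
\int_\Omega \nabla_h w_h\cdot\boeta_h\,d\bx \;=\; \int_\Omega \nabla_h w_h\cdot P_0\boeta_h\,d\bx ,
\]
i.e.\ the correction acts only on the elementwise means of $\boeta_h$. But the residual left by the Crouzeix--Raviart interpolant is supported precisely on the complementary part: using $\int_T\nabla\Pi_h^{CR}v\,d\bx=\int_T\nabla v\,d\bx$ and the fact that $\nabla\Pi_h^{CR}v$ is constant on each $T$, the residual reduces to $\int_\Omega\nabla v\cdot(\boeta_h-P_0\boeta_h)\,d\bx$, which is generically nonzero for $\boeta_h\in[K_h]^2$. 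No additive correction $w_h\in W_h$ can touch this term, so the auxiliary saddle-point problem you defer to is degenerate --- the bilinear form $(w_h,\boeta_h)\mapsto\int_\Omega\nabla_h w_h\cdot\boeta_h\,d\bx$ cannot satisfy an inf-sup condition over the mean-zero part of $\bM_h$, and a local element-by-element construction of test functions will not rescue it.

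The paper closes this step by a different, global route: it invokes the inf-sup stability of the nonconforming $P_1$ / conforming $P_1$ Stokes-type pair (Lemma \ref{lem1}, quoted from an earlier paper of the author), obtains from standard saddle-point theory a bounded Fortin operator $\bPi_h:\bH_0^1(\Omega)\to[W_h]^2$ matching divergence moments against $K_h\cap L^2_0(\Omega)$ (Lemma \ref{lem2}), and then cites a result from the companion paper to the effect that the scalar version of this operator satisfies the gradient-moment identity $\int_\Omega\nabla_h\Pi_h v\cdot\boeta_h\,d\bx=\int_\Omega\nabla v\cdot\boeta_h\,d\bx$ for all $\boeta_h\in\bM_h$ (Lemma \ref{lem3}). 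The essential point you are missing is that the required $\Pi_h$ cannot be reached by perturbing the canonical Crouzeix--Raviart interpolant; it has to be manufactured wholesale from an inf-sup argument for the pair $([W_h]^2,K_h)$, which is exactly the content of the lemmas the paper imports.
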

In order to prove Theorem \ref{th1} we use the following result 
proved in \cite{Lam13d}. 
\begin{lemma}\label{lem1}
There exists a consant $ \beta>0$ independent of the mesh-size $h$ such that 

\[ \sup_{\bv_h \in \bV_h} \frac{\int_{\Omega}\nabla_h \cdot \bv_h\,q_h\,d\bx}{ 
\|\bv_h\|_{1,h} } \geq \beta \|q_h\|_{0,\Omega},\quad q_h \in K_h.
\]
\end{lemma}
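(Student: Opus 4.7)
The plan is to prove the stated inf-sup by a Fortin-trick: for each $q_h \in K_h$, I will exhibit a concrete $\bv_h \in \bV_h$ with $\|\bv_h\|_{1,h} \le C\|q_h\|_{0,\Omega}$ whose pairing $\int_\Omega \nabla_h\cdot\bv_h\,q_h\,d\bx$ reproduces $\|q_h\|_{0,\Omega}^2$, and then take the ratio.

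First I would invoke continuous surjectivity of the divergence (Ne\v{c}as/de Rham) at the infinite-dimensional level: there exists $\bv \in \bH^1(\Omega)$, with boundary class compatible with $\bV_h$, satisfying $\nabla\cdot\bv = q_h$ on $\Omega$ and $\|\bv\|_{1,\Omega}\le C_1\|q_h\|_{0,\Omega}$. When $\bV_h$ enforces zero traces, the constant mean of $q_h$ has to be split off and dealt with separately, since the divergence theorem annihilates it on the left-hand side; the resulting piece of the argument relies on the global mass contribution.

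Second I would construct a Fortin-type quasi-interpolation $\Pi_h^V:\bH^1(\Omega)\to\bV_h$ enjoying
(i) the continuity $\|\Pi_h^V\bv\|_{1,h}\le C_2\|\bv\|_{1,\Omega}$, and
(ii) the divergence moment-matching identity
\[
\int_\Omega \nabla_h\cdot(\Pi_h^V\bv-\bv)\,q_h\,d\bx = 0,\quad q_h\in K_h.
\]
A Scott-Zhang or Cl\'ement base interpolant, corrected locally at each vertex to preserve the $|\CN_h|$ nodal moments $\int_\Omega(\nabla_h\cdot\bv_h)\,\varphi_i\,d\bx$ against the Lagrange nodal basis $\{\varphi_i\}$ of $K_h$, is the natural candidate; uniform invertibility of the resulting small patch-local linear systems would follow from a scaling / Bramble-Hilbert argument on each vertex patch. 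Setting $\bv_h:=\Pi_h^V\bv$ then yields $\int_\Omega\nabla_h\cdot\bv_h\,q_h\,d\bx = \|q_h\|_{0,\Omega}^2$ and $\|\bv_h\|_{1,h}\le C_1C_2\|q_h\|_{0,\Omega}$, whence the inf-sup with $\beta = 1/(C_1C_2)$.

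The main obstacle is the construction of $\Pi_h^V$: the naive $P_1$-$P_1$ velocity-pressure pairing is classically unstable for the Stokes problem, so the local corrections must exploit features absent from that setting, most importantly that $K_h$ consists of continuous $P_1$ functions rather than generic $L^2$ ones, so the divergence-moment constraints couple across neighbouring elements in a rigid way. A macroelement partitioning of $\CT_h$, reducing the global moment-matching to a finite collection of local inf-sup problems on patches and then gluing via a partition of unity, is the typical route, and is presumably how the proof in \cite{Lam13d} is carried out.
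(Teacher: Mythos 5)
First, a point of reference: the paper does not prove this lemma at all --- it simply cites \cite{Lam13d} for it --- so there is no in-paper argument to compare yours against; your proposal has to stand on its own. (Note also that the statement must be read with $\bv_h$ ranging over the vector Crouzeix--Raviart space $[W_h]^2$ rather than $\bV_h=[K_h]^2$ as literally written: the broken operators $\nabla_h\cdot$ and $\|\cdot\|_{1,h}$ only make sense there, and the conforming $P_1$--$P_1$ pair is unstable, so the literal statement would be false.) Your overall template --- continuous surjectivity of the divergence, followed by a uniformly $H^1$-stable Fortin operator matching divergence moments against $K_h$ --- is the standard and correct one, and you correctly flag both the crux (moment matching against \emph{continuous} piecewise linears is exactly where equal-order instability must be circumvented) and the constant mode (for $\bv_h\in[W_h]^2$ one has $\int_\Omega\nabla_h\cdot\bv_h\,d\bx=0$, so the estimate can only hold modulo constants, consistent with the restriction to $K_h\cap L^2_0(\Omega)$ in Lemma \ref{lem2}; your phrase ``relies on the global mass contribution'' does not actually resolve this).

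The genuine gap is that the one step carrying all the mathematical content is asserted, not proved. The natural Crouzeix--Raviart interpolant (matching edge averages) is a Fortin operator only against piecewise \emph{constant} pressures: element-wise integration by parts gives $\int_T\nabla\cdot(\Pi_h\bv-\bv)\,q_h\,d\bx=-\int_T(\Pi_h\bv-\bv)\cdot\nabla q_h\,d\bx+\int_{\partial T}(\Pi_h\bv-\bv)\cdot\bn\,q_h\,d\sigma$, and neither term vanishes for linear $q_h$ since only zeroth edge moments are preserved. So a correction is unavoidable, and your description of it does not establish it: the nodal moment constraints $\int_\Omega(\nabla_h\cdot\bv_h)\varphi_i\,d\bx$ at neighbouring vertices overlap, so the correction problem is a globally coupled linear system rather than a collection of independent patch problems, and a scaling/Bramble--Hilbert argument can only deliver uniformity of constants once solvability is known --- injectivity of the local maps is precisely the local inf-sup condition you would still have to verify, on patches of varying topology. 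A Verf\"urth-type two-estimate argument (weak inf-sup up to $\bigl(\sum_T h_T^2|q_h|_{1,T}^2\bigr)^{1/2}$, plus a separate local lower bound for that term using the edge basis functions of $W_h$) or Stenberg's macroelement technique would close this, but you only gesture at these possibilities. As written, the proposal is a correct plan whose central lemma is missing.
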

From the standard theory of saddle point problems this lemma implies the 
following lemma  \cite{BF91,Bra01}. 
\begin{lemma}\label{lem2}
Since the two spaces $\bH^1_0(\Omega)$ and $L^2_0(\Omega)$ satisfy the 
inf-sup condition 
\[ \sup_{\bu \in \bH^1_0(\Omega)}
\frac{\int_{\Omega}\nabla \cdot \bu \, \mu\,d\bx}{\|u\|_{1,\Omega}, } 
\geq \beta \|\mu\|_{L^2(\Omega)},\quad \mu \in L^2_0(\Omega), \]
where 
\[ L_0^2(\Omega) = \left\{v \in L^2(\Omega):\, \int_{\Omega} v\,d\bx=0\right\},\]
there exists a bounded linear projector  $\bPi_h : \bH^1_0(\Omega) \rightarrow  [W_h]^2$ 
such that  \[\int_{\Omega} \nabla_h \cdot \bPi_h \bu \,\mu_h \, d\bx = 
\int_{\Omega} \nabla \cdot  \bu\, \mu_h \, d\bx,\quad \mu_h \in  K_h \cap L^2_0(\Omega).\]
The boundedness means that there exists a constant $C$ independent of $h$ such that 
\[ \|\bPi_h\bu \|_{1,h} \leq C \|\bu\|_{1,\Omega}.\]
\end{lemma}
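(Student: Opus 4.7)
The plan is a standard Fortin-type construction: pick a stable but divergence-incorrect quasi-interpolation into $[W_h]^2$, then correct it by a divergence-fixing increment whose existence is guaranteed by Lemma \ref{lem1}.

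First I would take $R_h\bu$ to be the componentwise canonical Crouzeix--Raviart interpolant, defined by matching the edge-integral degrees of freedom $\int_e R_h\bu\,d\sigma = \int_e \bu\,d\sigma$ on every edge $e\in\CE_h$. For $\bu \in \bH_0^1(\Omega)$ the $L^2$-trace on $\partial\Omega$ vanishes, so the boundary edge-integrals vanish automatically and $R_h\bu \in [W_h]^2$; the classical interpolation estimate gives $\|R_h\bu\|_{1,h} \leq C\|\bu\|_{1,\Omega}$. In general $R_h\bu$ fails the target commutation, so I would associate to it the defect functional
\[
\ell_{\bu}(\mu_h) := \int_{\Omega}\nabla\cdot\bu\,\mu_h\,d\bx - \int_{\Omega}\nabla_h\cdot R_h\bu\,\mu_h\,d\bx,\qquad \mu_h\in K_h\cap L_0^2(\Omega).
\]
Both $\nabla\cdot\bu$ and $\nabla_h\cdot R_h\bu$ have $L^2$-norm bounded by $C\|\bu\|_{1,\Omega}$, so $\ell_{\bu}$ is linear in $\bu$ and bounded on $K_h\cap L_0^2(\Omega)$ with dual norm at most $C\|\bu\|_{1,\Omega}$.

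Next I would invoke Lemma \ref{lem1}. The standard closed-range argument for saddle-point problems turns the discrete inf-sup there into surjectivity of the operator $B_h : [W_h]^2 \to (K_h\cap L_0^2(\Omega))'$, $\langle B_h\bv_h,\mu_h\rangle := \int_{\Omega}\nabla_h\cdot\bv_h\,\mu_h\,d\bx$, and furnishes a bounded linear right inverse $T_h$ with operator norm at most $\beta^{-1}$; one concrete realisation is to let $T_h\ell$ be the unique preimage lying in the $\|\cdot\|_{1,h}$-orthogonal complement of $\ker B_h$. Setting $\bw_h := T_h\ell_{\bu}$ then yields an element of $[W_h]^2$ depending linearly on $\bu$, with $\|\bw_h\|_{1,h}\leq C\|\bu\|_{1,\Omega}$ and
\[
\int_{\Omega}\nabla_h\cdot\bw_h\,\mu_h\,d\bx = \ell_{\bu}(\mu_h),\qquad \mu_h\in K_h\cap L_0^2(\Omega).
\]

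Finally, I would define $\bPi_h\bu := R_h\bu + \bw_h$. Linearity and $\|\cdot\|_{1,h}$-boundedness are inherited directly from $R_h$ and $T_h$, and the required commutation relation of Lemma \ref{lem2} follows by the very definition of $\ell_{\bu}$. The main technical point that will require attention is verifying that the inf-sup of Lemma \ref{lem1} really applies in the form needed here — namely between $[W_h]^2$ equipped with $\|\cdot\|_{1,h}$ and $K_h$ equipped with $\|\cdot\|_{0,\Omega}$ — and that the inequality survives intact upon restricting the test space to the mean-zero subspace $K_h\cap L_0^2(\Omega)$, which is routine but deserves to be recorded. The continuous inf-sup hypothesis cited in the Lemma is not used explicitly in the construction; it merely ensures that $\nabla\cdot\bu=\mu$ for $\mu\in L_0^2(\Omega)$ is solvable at the continuous level, so that the construction is consistent with the continuous framework it is meant to approximate.
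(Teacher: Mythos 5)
Your construction is correct and is essentially the argument the paper intends: the paper justifies Lemma \ref{lem2} solely by invoking the standard saddle-point equivalence between the discrete inf-sup condition of Lemma \ref{lem1} and the existence of a bounded right inverse (Fortin operator) for the discrete divergence acting from $[W_h]^2$ to $(K_h\cap L^2_0(\Omega))'$, which is exactly what you make explicit. Your preliminary Crouzeix--Raviart interpolant $R_h$ is harmless but not needed, since $\nabla\cdot\bu\in L^2(\Omega)$ lets one apply the right inverse directly to the functional $\mu_h\mapsto\int_{\Omega}\nabla\cdot\bu\,\mu_h\,d\bx$; and you are right to flag that Lemma \ref{lem1} must be read with $\bv_h$ ranging over $[W_h]^2$ and with $q_h$ restricted to the mean-zero subspace for the argument to apply.
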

Let $\Pi_h :H^1_0(\Omega) \rightarrow W_h$ be the scalar version of the 
projector $\bPi_h$. Then this projector $\Pi_h$ is bounded and has the following property 
\cite{Lam13c}.

\begin{lemma}\label{lem3}
Let $ v \in H^1_0(\Omega)$.
The interpolation  operator $\Pi_h$  satisfies
\[ \int_{\Omega} \nabla_h \Pi_h v\cdot \boeta_h\, d\bx  = 
\int_{\Omega} \nabla v\cdot \boeta_h\, d\bx,\;\boeta_h \in \bM_h.\]
\end{lemma}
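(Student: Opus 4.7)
The key idea is that $\bPi_h$ from Lemma~\ref{lem2} acts componentwise as $\Pi_h$. My plan is to apply Lemma~\ref{lem2} to the special vector fields $v\mathbf{e}_i \in \bH^1_0(\Omega)$, $i=1,2$, with $v \in H^1_0(\Omega)$ and $\mathbf{e}_i$ the $i$-th Euclidean basis vector. Since $\bPi_h(v\mathbf{e}_i) = (\Pi_h v)\mathbf{e}_i$ and $\nabla\cdot(v\mathbf{e}_i) = \partial_i v$, this yields the scalar partial-derivative preservation
\[\int_\Omega (\nabla_h \Pi_h v)_i \, \mu_h \, d\bx = \int_\Omega (\nabla v)_i \, \mu_h \, d\bx, \quad \mu_h \in K_h \cap L^2_0(\Omega), \ i=1,2.\]
Testing component-by-component against $\boeta_h = (\eta_h^1,\eta_h^2) \in \bM_h$ already settles the claim in the case when both $\eta_h^i$ have zero mean on $\Omega$.

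To extend to a general $\boeta_h \in \bM_h = [K_h]^2$, I would decompose $\boeta_h = \boeta_h' + \bar\boeta_h$, where each component of $\boeta_h'$ has zero mean and $\bar\boeta_h$ is a constant vector. The zero-mean piece is settled by the identity above. For the constant part, the right-hand side $\int_\Omega \nabla v \cdot \bar\boeta_h \, d\bx$ vanishes by the divergence theorem together with $v\in H^1_0(\Omega)$. For the left-hand side, element-wise integration by parts combined with $\nabla\cdot\bar\boeta_h = 0$ reduces $\int_\Omega \nabla_h \Pi_h v \cdot \bar\boeta_h \, d\bx$ to $\sum_{T \in \CT_h} \int_{\partial T} \Pi_h v \, \bar\boeta_h \cdot \bn \, d\sigma$. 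Because $\bar\boeta_h$ is constant, $\bar\boeta_h \cdot \bn$ is constant on each edge, so grouping contributions by edges collapses the sum to a combination of interior-edge terms $(\bar\boeta_h \cdot \bn)\int_e [\Pi_h v]_e \, d\sigma$ and boundary-edge terms $(\bar\boeta_h \cdot \bn)\int_e \Pi_h v \, d\sigma$. Both vanish, respectively by the defining Crouzeix--Raviart weak continuity of $S_h$ and by the boundary condition encoded in $W_h$.

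The main obstacle is precisely this constant-part argument. A naive element-wise integration by parts applied directly to a general $\boeta_h \in [K_h]^2$ does not close: the factor $\boeta_h \cdot \bn$ is then linear (not constant) on each edge, so the edge integrals $\int_e [\Pi_h v]_e \, \boeta_h \cdot \bn \, d\sigma$ pick up nonzero contributions from the first-moment piece of the Crouzeix--Raviart jump, and the identity fails term by term. Splitting off the mean is what allows us to outsource the nontrivial action on the zero-mean part to Lemma~\ref{lem2}, while the harmless constant residue is dispatched using only the Crouzeix--Raviart weak continuity and the $W_h$ boundary condition.
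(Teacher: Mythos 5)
The paper itself gives no proof of this lemma --- it simply declares $\Pi_h$ to be ``the scalar version'' of the vector projector $\bPi_h$ and cites \cite{Lam13c} --- so your argument is necessarily a reconstruction rather than a match. The second half of it is correct and fills a real hole in the paper's presentation: Lemma \ref{lem2} only tests against $K_h\cap L^2_0(\Omega)$, so the mean-value part of $\boeta_h$ genuinely needs separate treatment, and your handling of the constant vector (divergence theorem plus $v\in H^1_0(\Omega)$ on the right; element-wise integration by parts, the Crouzeix--Raviart weak continuity $\int_e[\Pi_h v]_e\,d\sigma=0$, and the boundary condition built into $W_h$ on the left) is sound. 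Your closing observation about why a naive element-wise integration by parts fails for a genuinely linear $\boeta_h$ is also accurate.

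The gap is in the step you yourself flag as the key idea: the claim that $\bPi_h(v\mathbf{e}_i)=(\Pi_h v)\mathbf{e}_i$. Lemma \ref{lem2} asserts only the existence of \emph{some} bounded projector preserving the divergence against $K_h\cap L^2_0(\Omega)$; the operator delivered by the standard saddle-point (Fortin) construction couples the two components through the divergence constraint and is not, in general, of the diagonal form $\mathrm{diag}(\Pi_h,\Pi_h)$. Applying Lemma \ref{lem2} to $v\mathbf{e}_1$ therefore produces a pair $(w^1_h,w^2_h)$ satisfying $\int_\Omega(\partial_1 w^1_h+\partial_2 w^2_h)\,\mu_h\,d\bx=\int_\Omega \partial_1 v\,\mu_h\,d\bx$, from which one cannot extract a single-derivative identity for one scalar function that works simultaneously for both partial derivatives. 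Indeed, the existence of a componentwise $\bPi_h$ with the property of Lemma \ref{lem2} is essentially \emph{equivalent} to the zero-mean case of the statement being proved, so the reduction is circular. What is actually needed (and what the citation to \cite{Lam13c} is standing in for) is a direct construction of a bounded scalar operator $\Pi_h:H^1_0(\Omega)\rightarrow W_h$ with $\int_\Omega\nabla_h\Pi_h v\cdot\boeta_h\,d\bx=\int_\Omega\nabla v\cdot\boeta_h\,d\bx$, e.g.\ via an inf-sup condition for the bilinear form $(v_h,\boeta_h)\mapsto\int_\Omega\nabla_h v_h\cdot\boeta_h\,d\bx$ on $W_h\times\bM_h$; to be fair, the paper's own phrase ``scalar version of the projector $\bPi_h$'' makes the same unjustified leap, but a self-contained proof has to close it.
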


Now we present the proof of Theorem \ref{th1}. 
\begin{proof}
Let $\bQ_h:\bV \rightarrow \bV_h$ be the orthogonal 
projection. Then we have 
\[ \int_{\Omega} \bQ_h \bv\cdot \bw_h\,d\bx = 
\int_{\Omega}   \bv\cdot \bw_h \,d\bx,\quad \bw_h \in \bM_h.\]
Moreover, from Lemma \ref{lem3} we 
have 
\[  \int_{\Omega} \nabla_h \Pi_h v\cdot \boeta_h\, d\bx  = 
\int_{\Omega} \nabla v\cdot \boeta_h\, d\bx,\;\boeta_h \in \bM_h.\]
Hence we have \eqref{op} :
\[ b( \bQ_h\bpsi,\Pi_h v;\boeta_h) =  b(\bpsi,v;\boeta_h),\quad 
\boeta_h \in \bM_h,\]
since 
\[ b( \bQ_h\bpsi,\Pi_h v;\boeta_h) = 
\int_{\Omega} \left(\bQ_h\bpsi-\nabla_h\Pi_hv\right)\cdot \boeta_h\,d\bx= 
\int_{\Omega} \left(\bpsi-\nabla v\right)\cdot \boeta_h\,d\bx.
\]
Thus the theorem is proved.
\end{proof}

\subsection{Second example for $\bM_h$}
Our second example of the discrete Lagrange 
space is based on a biorthogonal system \cite{BWHabil,KLP01,Lam06}.
Let $N$ be the number of vertices in the finite element mesh, and 
$\{\varphi_1,\ldots,\varphi_N\}$ be the finite element basis of 
$K_h$. We construct a space 
$M_h$ spanned by the basis 
$\{\xi_1,\ldots,\xi_N\}$, where  the basis functions of $K_h$ and 
$M_h$ satisfy a condition of biorthogonality relation
\begin{eqnarray} \label{biorth}
  \int_{\Omega} \xi_i \ \varphi_j \, d\bx = c_j \delta_{ij}\, ,
\quad c_j\neq 0,\; 1\le i,j \le N\, ,
\end{eqnarray}
 where  $\delta_{ij}$ is 
 the Kronecker symbol, and $c_j$ a scaling factor. 
 The scaling factor can be chosen so that 
 $\int_{T} \xi_i\,d\bx=\int_{T} \varphi_i\,d\bx$.

 The basis functions of $M_h$ are also associated with 
 the vertices of the finite element mesh $\CT_h$, and 
 they are  constructed 
locally on a reference element $\hat T$. 
The local basis functions of $M_h$ on the 
reference triangle $\hat T:=\{(x,y) :\, 0\leq x,0\leq y,x+y\leq 1\}$ are given by 
\begin{eqnarray*}
  \hat \xi_1:=3-4x-4y,\,
  \hat\xi_2:=4x-1,\;\text{and}\;
  \hat\xi_3:=4y-1,
\end{eqnarray*}
associated with its three vertices $(0,0)$, $(1,0)$ and $(0,1)$, 
respectively. 
Note that the sum of all basis functions is one. 

The global basis functions for the space $M_h$ are constructed by 
glueing the local basis functions together in the same way 
as the global basis functions of $K_h$ are constructed. 
Such a biorthogonal system is first used in the context of 
mortar finite elements \cite{BWHabil,KLP01,Lam06}. Construction of basis functions 
of $M_h$ satisfying the biorthogonality and an optimal approximation
 property for a higher order finite element space is considered 
in \cite{Lam06}. 
We now prove Theorem \ref{th1} using the discrete Lagrange multiplier space 
$\bM_h := [M_h]^2$.  First we prove the 
following lemma. 
\begin{lemma}\label{lem4}
There exists a consant $ \beta>0$ independent of the mesh-size $h$ such that 
\[ \sup_{\bv_h \in \bV_h} \frac{\int_{\Omega}\nabla_h \cdot \bv_h\,q_h\,d\bx}{ 
\|\bv_h\|_{1,h} } \geq \beta \|q_h\|_{0,\Omega},\quad q_h \in M_h.
\]
\end{lemma}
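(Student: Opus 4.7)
The plan is to reduce the desired inf--sup bound over $M_h$ to the one already proved for $K_h$ in Lemma \ref{lem1}, by pairing each $q_h \in M_h$ with a twin function in $K_h$ sharing the same nodal coefficients. Concretely, given $q_h = \sum_{i=1}^N q_i\,\xi_i \in M_h$, I would introduce
\[
\tilde{q}_h := \sum_{i=1}^N q_i\,\varphi_i \in K_h.
\]
The biorthogonal basis $\{\xi_i\}$ is chosen precisely so that $\tilde q_h$ behaves like $q_h$ against piecewise-constant test functions.

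The decisive observation is the identity
\[
\int_\Omega \nabla_h\cdot\bv_h\,q_h\,d\bx \;=\; \int_\Omega \nabla_h\cdot\bv_h\,\tilde{q}_h\,d\bx
\qquad \text{for every } \bv_h \in \bV_h,
\]
which I would verify elementwise: on each $T\in\CT_h$ the restriction $\nabla\cdot \bv_h|_T$ is a constant, while the chosen scaling $\int_T \xi_i\,d\bx=\int_T \varphi_i\,d\bx$ of the biorthogonal system forces $\int_T (q_h - \tilde{q}_h)\,d\bx = 0$. Applying Lemma \ref{lem1} to $\tilde{q}_h \in K_h$ then yields
\[
\sup_{\bv_h \in \bV_h} \frac{\int_\Omega \nabla_h\cdot\bv_h\,q_h\,d\bx}{\|\bv_h\|_{1,h}}
\;=\; \sup_{\bv_h \in \bV_h} \frac{\int_\Omega \nabla_h\cdot\bv_h\,\tilde{q}_h\,d\bx}{\|\bv_h\|_{1,h}}
\;\geq\; \beta \,\|\tilde{q}_h\|_{0,\Omega},
\]
and it remains only to replace $\|\tilde q_h\|_{0,\Omega}$ by $\|q_h\|_{0,\Omega}$ up to a constant.

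To finish, I need the uniform norm equivalence $\|\tilde{q}_h\|_{0,\Omega} \geq c\,\|q_h\|_{0,\Omega}$ with $c>0$ independent of $h$. On each $T$, both $\|q_h\|_{0,T}^2$ and $\|\tilde{q}_h\|_{0,T}^2$ are positive definite quadratic forms in the three coefficients attached to the vertices of $T$; pulling back to the reference triangle and using a standard scaling argument shows each is equivalent to $h_T^2(q_{i_1}^2+q_{i_2}^2+q_{i_3}^2)$ with constants depending only on shape regularity. Summing over $T$ and invoking the uniformly bounded number of elements meeting at each vertex delivers the required equivalence. I expect this norm equivalence to be the only semi-technical routine step; the real content of the proof is the biorthogonality--scaling trick, after which the inf--sup condition on $M_h$ inherits directly from the one on $K_h$.
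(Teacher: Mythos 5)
Your proof is correct and follows essentially the same route as the paper: the map $q_h=\sum_i q_i\xi_i\mapsto \tilde q_h=\sum_i q_i\varphi_i$ is exactly the operator $I_h$ used there, the equality of the two suprema rests on the same scaling $\int_T\xi_i\,d\bx=\int_T\varphi_i\,d\bx$ combined with $\nabla\cdot\bv_h$ being elementwise constant, and the conclusion uses the same norm equivalence $\|\tilde q_h\|_{0,\Omega}\sim\|q_h\|_{0,\Omega}\sim\bigl(\sum_i q_i^2h_i^2\bigr)^{1/2}$. If anything, your write-up supplies details the paper leaves implicit (the elementwise justification of the integral identity and the scaling argument for the norm equivalence).
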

\begin{proof}
We consider an operator $I_h : M_h \rightarrow K_h$ such that 
\[ I_h \mu_h = \sum_{i=1}^Nc_i\varphi_i \quad \text{for}\; 
\mu_h = \sum_{i=1}^N c_i \xi_i.\]
We first note that the basis functions of $M_h$ are constructed in such a way that 
\[ \int_{T} \xi_i \,d\bx =  \int_{T} \varphi_i \,d\bx,\quad 1 \leq i \leq N.
\] 
Let $\mu_h \in M_h$, and $I_h \mu_h \in K_h$. Then 
\[ \sup_{\bv_h \in \bV_h} \frac{\int_{\Omega}\nabla_h \cdot \bv_h\,\mu_h\,d\bx } { 
\|\bv_h\|_{1,h} } = \sup_{\bv_h \in \bV_h} \frac{\int_{\Omega}\nabla_h \cdot \bv_h\,I_h \mu_h\,d\bx } { 
\|\bv_h\|_{1,h} } \geq \beta \|I_h\mu_h\|_{0,\Omega}.
\]
The result follows by using the fact that 
$\|I_h\mu_h\|^2_0$, $\|\mu_h\|^2_0$ and 
$\sum_{i=1}^Nc^2_i h^2_i$ are equivalent, where $h_i$ is the local 
mesh-size at the $i$th node of $\CT_h$.
\end{proof}
We can apply the standard theory of saddle point problems as 
in  Lemma \ref{lem2} to get the following result.
\begin{lemma}\label{lem5}
There exists a bounded linear projector  $\bPi_h : \bH^1_0(\Omega) \rightarrow  [W_h]^2$ 
such that  \[\int_{\Omega} \nabla_h \cdot \bPi_h \bu \,\mu_h \, d\bx = 
\int_{\Omega} \nabla \cdot  \bu\, \mu_h \, d\bx,\quad \mu_h \in  K_h \cap L^2_0(\Omega).\]
\end{lemma}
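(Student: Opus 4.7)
The plan is to mimic the proof of Lemma \ref{lem2} line-for-line, with Lemma \ref{lem4} playing the role previously played by Lemma \ref{lem1}. The engine is standard Brezzi--Fortin theory: the continuous Ne\v{c}as inf-sup of $\nabla\cdot$ between $\bH^1_0(\Omega)$ and $L^2_0(\Omega)$, together with the $h$-uniform discrete inf-sup of Lemma \ref{lem4} between $[W_h]^2$ (equipped with the broken norm) and $M_h$, yield a bounded linear Fortin-type interpolation into $[W_h]^2$ that commutes with the divergence when paired against the discrete pressure space. (The statement of Lemma \ref{lem5} as printed has $K_h$ in place of $M_h$; I will read it as $M_h$, which is what Lemma \ref{lem4} actually supplies and what is needed for the biorthogonal choice $\bM_h=[M_h]^2$.)

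Concretely, I would first choose a bounded linear quasi-interpolation $\bu_h^{0}\in [W_h]^2$ of $\bu\in\bH^1_0(\Omega)$; the componentwise Crouzeix--Raviart interpolant is the natural choice, since its edge-average property immediately gives $\|\bu_h^{0}\|_{1,h}\le C\|\bu\|_{1,\Omega}$. Set $g:=\nabla\cdot\bu-\nabla_h\cdot\bu_h^{0}$ and invoke Lemma \ref{lem4}, combined with the standard theory of saddle-point problems, to obtain a bounded linear correction $\bv_h\in[W_h]^2$ such that
\[ \int_\Omega \nabla_h\cdot\bv_h\,\mu_h\,d\bx=\int_\Omega g\,\mu_h\,d\bx,\qquad \mu_h\in M_h\cap L^2_0(\Omega), \]
with $\|\bv_h\|_{1,h}\le C\|\bu\|_{1,\Omega}$, and set $\bPi_h\bu:=\bu_h^{0}+\bv_h$. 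To promote the commuting identity from $M_h\cap L^2_0(\Omega)$ to all of $M_h$, split any $\mu_h$ into its mean and its mean-zero part: the mean-zero part is handled by construction, and on constants both sides vanish because $\int_\Omega\nabla_h\cdot\bw_h\,d\bx=0$ for every $\bw_h\in[W_h]^2$ (sum the divergence theorem element by element; interior-edge contributions cancel by the Crouzeix--Raviart edge-average continuity, and boundary contributions vanish by the zero edge-average condition built into $W_h$), while $\int_\Omega\nabla\cdot\bu\,d\bx=0$ for $\bu\in\bH^1_0(\Omega)$ by the divergence theorem.

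The main --- and essentially only --- obstacle is the quantitative step inside the invocation of Lemma \ref{lem4}: one has to verify that the dual norm of $g$ on $M_h\cap L^2_0(\Omega)$ is controlled by $\|\bu\|_{1,\Omega}$ uniformly in $h$, so that the $h$-independent inf-sup constant of Lemma \ref{lem4} really does translate into an $h$-independent bound on $\bv_h$. Since both $\nabla\cdot\bu$ and $\nabla_h\cdot\bu_h^{0}$ are in $L^2$ with the correct $\|\bu\|_{1,\Omega}$ bound, this is routine, and the remainder of the argument is pure soft analysis from the theory of saddle-point problems, exactly as indicated in the sentence that precedes Lemma \ref{lem5}.
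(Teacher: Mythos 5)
Your proposal is correct and takes essentially the same route as the paper, which gives no details and simply invokes ``the standard theory of saddle point problems as in Lemma \ref{lem2}'' --- i.e.\ the Fortin-operator construction combining the continuous inf-sup for the divergence on $\bH^1_0(\Omega)\times L^2_0(\Omega)$ with the discrete inf-sup of Lemma \ref{lem4}; you merely spell out the steps (bounded quasi-interpolant plus inf-sup correction, and the separate treatment of the constant mode) that the paper leaves implicit. Your reading of the test space as $M_h$ rather than $K_h$ is also the intended one, consistent with Lemma \ref{lem4} and with the choice $\bM_h=[M_h]^2$ in this second example.
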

Thus we have the existence of a bounded projector $\Pi_h :H^1_0(\Omega) \rightarrow W_h$ 
as in our first example as the scalar version of $\bPi_h$.
\begin{lemma}\label{lem6}
Let $ v \in H^1_0(\Omega)$.
The interpolation  operator $\Pi_h$  satisfies
\[ \int_{\Omega} \nabla_h \Pi_h v\cdot \boeta_h\, d\bx  = 
\int_{\Omega} \nabla v\cdot \boeta_h\, d\bx,\;\boeta_h \in \bM_h.\]
\end{lemma}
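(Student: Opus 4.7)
The plan is to prove Lemma \ref{lem6} by adapting the argument behind Lemma \ref{lem3}, with Lemma \ref{lem5} playing the role of Lemma \ref{lem2}.

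First I would reduce the vector identity to a scalar one: writing $\boeta_h = (\eta_1,\eta_2)$ with $\eta_i \in M_h$, it suffices to establish, for $i = 1,2$,
\[
\int_\Omega \partial_i^h \Pi_h v \,\mu_h \, d\bx = \int_\Omega \partial_i v \,\mu_h \, d\bx, \qquad \mu_h \in M_h,
\]
where $\partial_i^h$ denotes the broken partial derivative.

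Next, on the zero-mean subspace $M_h \cap L^2_0(\Omega)$, I would apply Lemma \ref{lem5} to the lifted vector field $\mathbf{u}_i := v\,\mathbf{e}_i \in \bH^1_0(\Omega)$, where $\mathbf{e}_i$ is the $i$-th coordinate vector. Since $\nabla\cdot\mathbf{u}_i = \partial_i v$ and the scalar construction of $\Pi_h$ is arranged so that $\bPi_h \mathbf{u}_i = (\Pi_h v)\,\mathbf{e}_i$, so that $\nabla_h\cdot \bPi_h \mathbf{u}_i = \partial_i^h \Pi_h v$, Lemma \ref{lem5} --- read with its natural test space $M_h \cap L^2_0(\Omega)$ coming from the inf-sup of Lemma \ref{lem4} --- yields the identity on $M_h \cap L^2_0(\Omega)$.

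Finally, to pass from $M_h \cap L^2_0(\Omega)$ to all of $M_h$, I would exploit that the local dual basis satisfies $\hat\xi_1+\hat\xi_2+\hat\xi_3 \equiv 1$, so constants belong to $M_h$ and any $\mu_h \in M_h$ splits as $(\mu_h - \bar\mu_h) + \bar\mu_h$ with $\mu_h - \bar\mu_h \in M_h \cap L^2_0(\Omega)$ and $\bar\mu_h$ constant. The constant part reduces to showing $\int_\Omega \partial_i^h \Pi_h v \,d\bx = \int_\Omega \partial_i v \,d\bx$; the right-hand side vanishes by $v \in H^1_0(\Omega)$, and the left-hand side vanishes by the elementwise divergence theorem combined with the Crouzeix-Raviart weak continuity $J_e(\Pi_h v) = 0$ on interior edges and the boundary constraint $\int_e \Pi_h v\, d\sigma = 0$ on $e \subset \Gamma$ built into $W_h$.

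The main obstacle is the middle step: confirming that $\bPi_h$ can indeed be chosen to preserve the coordinate-split structure of the lift $v\,\mathbf{e}_i$ --- so that the scalar Fortin identity for $\Pi_h$ emerges componentwise --- and that the test space in Lemma \ref{lem5} is to be read as $M_h \cap L^2_0(\Omega)$ (its natural home given the inf-sup of Lemma \ref{lem4}). Once this is secured, the constant-mean reduction and the boundary/CR accounting for the left-hand side are routine.
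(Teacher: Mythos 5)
First, note that the paper itself offers no proof of Lemma \ref{lem6}: it is asserted as an immediate consequence of taking ``the scalar version'' of the vector projector $\bPi_h$ of Lemma \ref{lem5}, with the details deferred to the reference cited for the analogous Lemma \ref{lem3}. Your proposal therefore has to be judged on its own. Two of your three steps are sound and supply details the paper omits: the reduction to the zero-mean subspace is legitimate because $\hat\xi_1+\hat\xi_2+\hat\xi_3\equiv 1$ puts the constants in $M_h$, and your treatment of the constant mode is correct --- $\int_\Omega \partial_i v\,d\bx=0$ for $v\in H^1_0(\Omega)$, while $\int_\Omega\partial_i^h\Pi_h v\,d\bx$ vanishes by summing the elementwise divergence theorem, using that the normal is constant on each straight edge together with $J_e(\Pi_h v)=0$ on interior edges and $\int_e\Pi_h v\,d\sigma=0$ on boundary edges built into $W_h$. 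You are also right that the test space in Lemma \ref{lem5} must be read as $M_h\cap L^2_0(\Omega)$ rather than $K_h\cap L^2_0(\Omega)$.

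The genuine gap is the step you yourself flag as ``the main obstacle'': the claim that $\bPi_h(v\,\mathbf{e}_i)=(\Pi_h v)\,\mathbf{e}_i$ with one and the same scalar function $\Pi_h v$ for both $i=1$ and $i=2$. This does not follow from Lemma \ref{lem5}. The divergence identity furnishes, for each test function $\mu_h$, a single scalar condition on the vector field $\bPi_h\bu$, whereas the gradient identity of Lemma \ref{lem6} imposes two conditions per $\mu_h$ (one per partial derivative) on a single element of $W_h$; the generic Fortin operator produced by the abstract saddle-point theory from the inf-sup of Lemma \ref{lem4} couples the two components through the divergence and has no reason to be diagonal, let alone diagonal with equal entries. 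Even granting a diagonal $\bPi_h=\mathrm{diag}(\Pi_h^{(1)},\Pi_h^{(2)})$, your lift argument yields $\int_\Omega\partial_1^h\Pi_h^{(1)}v\,\mu_h\,d\bx=\int_\Omega\partial_1 v\,\mu_h\,d\bx$ and $\int_\Omega\partial_2^h\Pi_h^{(2)}v\,\mu_h\,d\bx=\int_\Omega\partial_2 v\,\mu_h\,d\bx$ with possibly different functions $\Pi_h^{(1)}v\neq\Pi_h^{(2)}v$, which is not the assertion of the lemma. What is actually needed is a stability (inf-sup) statement for the bilinear form $(w_h,\boeta_h)\mapsto\int_\Omega\nabla_h w_h\cdot\boeta_h\,d\bx$ on $W_h\times\bM_h$, modulo the compatible modes identified above, from which the scalar operator $\Pi_h$ can be constructed directly; this is a different, and stronger, statement than Lemma \ref{lem4}, and it is precisely the content the paper silently imports from its reference. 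Until that is established, the proposal does not close.
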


Now we present the proof of Theorem \ref{th1} for the second 
example of the discrete Lagrange multiplier space.
\begin{proof}
Let $\bQ_h:\bV \rightarrow \bV_h$ be a quasi-projection defined as 
\begin{equation}\label{quasi}
\int_{\Omega} \bQ_h \bv\cdot \bw_h\,d\bx = 
\int_{\Omega}   \bv\cdot \bw_h \,d\bx,\quad \bw_h \in \bM_h.
\end{equation}
This quasi-projection is well-defined due to Assumptions 
\ref{A1A2}(i)--(ii). We note that for the projection $\Pi_h$ from Lemma \ref{lem6} 
we have
\[ b( \bQ_h\bpsi,\Pi_h v;\boeta_h) = 
\int_{\Omega} \left(\bQ_h\bpsi-\nabla_h\Pi_hv\right)\cdot \boeta_h\,d\bx= 
\int_{\Omega} \left(\bpsi-\nabla v\right)\cdot \boeta_h\,d\bx,
\]
which yields  \eqref{op}: 
\[ b( \bQ_h\bpsi,\Pi_h v;\boeta_h) =  b(\bpsi,v;\boeta_h),\quad 
\boeta_h \in \bM_h.\]

Hence the theorem is proved.
\end{proof}

\section{Clamped boundary condition} \label{sec:clbc}
It is more difficult to construct a discrete Lagrange multiplier space 
for the {\em clamped} boundary condition. 
For example, for the discrete Lagrange multiplier space in the first example above if we choose 
$\bM_h=[K_h]^2$  Assumption \ref{A1A2}(i) is 
violated, and if we choose $\bM_h=[K^0_h]^2$, 
Assumption \ref{A1A2}(iii) is violated  leading to 
a sub-optimal approximation property. 
In order to satisfy Assumption 1(iii), the discrete Lagrange multiplier space
should contain constants in $\Omega$, and this does not happen 
if we choose $\bM_h =[K_h^0]^2$. While 
it may be possible to work around without Assumption \ref{A1A2}(i) 
we see two difficulties if we remove this assumption. 
The first difficulty is that the analysis will be much more 
difficult. The second difficulty is that the Gram matrix 
between the basis functions of $\bM_h$ and $\bV_h$ will not be 
a square matrix. If the Gram matrix is not square, 
we cannot statically condense out the 
degrees of freedom corresponding the Lagrange multiplier 
from the algebraic system. 

We now propose a modification of the discrete Lagrange multiplier space 
 \cite{Lam13c} to adapt to the situation of {\em clamped} boundary condition,
 which combines the idea of mortar finite element techniques \cite{BD98}
with that of \cite{AB93} to satisfy Assumptions \ref{A1A2}(i) 
and \ref{A1A2}(iii).  The presented modification 
is exactly as in \cite{Lam13c}. We repeat the approach 
here for completeness.

We start with splitting the basis functions of $M_h$ to two groups: 
basis functions associated with the inner vertices of $\CT_h$ and 
basis functions associated with the boundary vertices in $\CT_h$. 
Let $L \in \bN$ with $L<N$ be the number of inner vertices in $\CT_h$. 
Let \[ \CB^1_h= \{\varphi_1,\varphi_2,\cdots,\varphi_L\}, \quad\text{and}\quad 
\CB^2_h = \{\varphi_{L+1},\cdots, \varphi_N\}\] 
be the two sets of basis functions  of $K_h$ associated with the 
inner and boundary vertices in $\CT_h$, respectively. 
In the following, we assume that each triangle has at least 
one interior vertex. A necessary modification 
for the case where a triangle has all its vertices on the boundary 
is given in \cite{BD98}. Let $\CN$, $\CN_0$ and $\partial\CN$ be the set of all vertices 
of $\CT_h$, the vertices of $\CT_h$ interior to $\Omega$, and 
the vertices of $\CT_h$ on the boundary of $\Omega$, 
respectively.  We define the set of all 
vertices which share a common edge with the vertex  $ i \in\CN$ as 
\[ \cS_i = \{j: \text{$i$ and $j$ share a common edge}\},\]
and the set of neighbouring vertices of $i \in \CN_0$ as 
\[ \cI_i =\{ j \in \CN_0:\,j \in \cS_i\}.\]

Then the set of all those interior vertices which have a neighbour on the 
boundary of $\Omega$ is defined as 
\[ \cI = \bigcup_{i \in \partial\CN}\cI_i,\qquad \text{See Figure \ref{SI}}.
\] 
 \begin{figure}[!ht]
\begin{center}
 \includegraphics[width =0.68\textwidth]{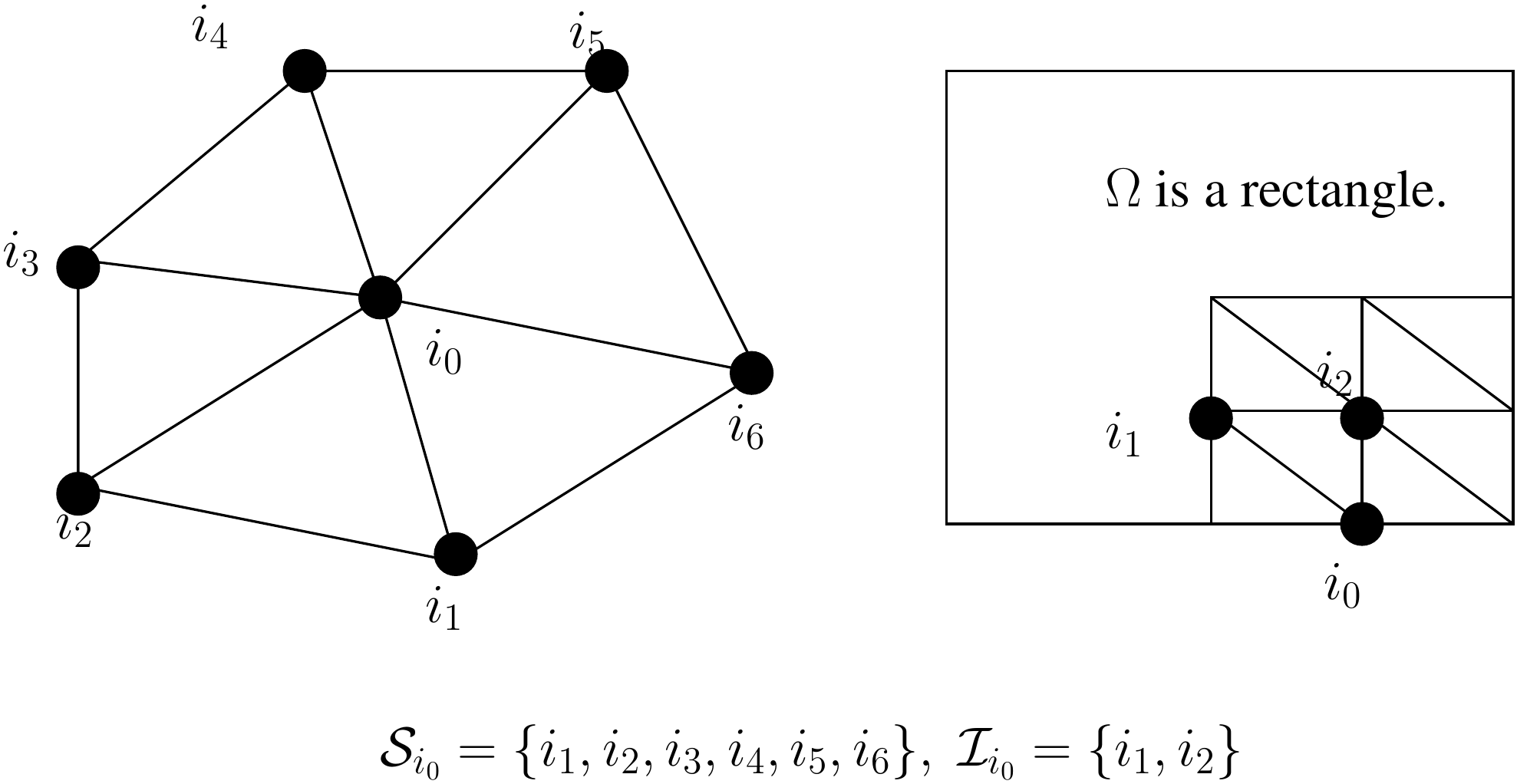}
 \caption{Examples for ${\mathcal S}_i$ and ${\mathcal I}_i$} 
\end{center}
 \label{SI}
\end{figure}

The finite element basis functions $\{\phi_1,\phi_2,\cdots,\phi_L\}$ 
for $\widetilde M_h$ are defined as 
\[ \phi_i = \begin{cases} \varphi_i, & i \in \CN_0\backslash \cI\\
 \varphi_i + \sum_{j \in \partial \CN \cap \cS_i}A_{j,i} \varphi_j,\; 
A_{j,i} \geq 0, & i \in \cI
\end{cases}.
\]
We can immediately see that $\dim \bM_h = \dim \bV_h$. Moreover, 
if the coefficients $A_{i,j}$ are  chosen to satisfy 
\[ \sum_{j \in \cS_i} A_{i,j} =1,\quad i \in \cI,\]
Assumptions 1(ii) and 1(iii) are also satisfied,
see  \cite{BD98} for a proof. 
The vector Lagrange multiplier space is defined as $\widetilde \bM_h =[\widetilde M_h]^2$.  
Since $\widetilde \bM_h \subset \bM_h$ for both examples
 we have the following theorem.
\begin{theorem}\label{th3}
There exist two bounded linear projectors  
$\bQ_h :\bH_0^1(\Omega) \rightarrow \bV_h$ 
and 
$\Pi_h : H_0^1(\Omega) \rightarrow W_h$ 
for which 
\[ b( Q_h\bpsi,\Pi_h v;\boeta_h) = b(\bpsi,v;\boeta_h),\quad 
\boeta_h \in \widetilde \bM_h.\]
\end{theorem}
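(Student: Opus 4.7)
The plan is to mimic the two-step construction of Theorem \ref{th1}, adapted to the clamped setting in which $\bV_h=[K^0_h]^2$ and $\widetilde{\bM}_h$ replaces $\bM_h$. The crucial count $\dim\widetilde{\bM}_h = 2L = \dim\bV_h$ is built into the definition of $\widetilde M_h$, and $\widetilde{\bM}_h \subset \bM_h$ means many of the bounds from Section \ref{sec:ssbc} will be inherited. The real task is to rebuild the two projectors with $\widetilde{\bM}_h$ as the test space.

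First I would establish the analogue of Lemmas \ref{lem1} and \ref{lem4} for the pair $(\bV_h,\widetilde M_h)$, namely that there exists $\beta>0$ independent of $h$ such that
\begin{equation*}
\sup_{\bv_h\in\bV_h}\frac{\int_\Omega \nabla_h\cdot\bv_h\,q_h\,d\bx}{\|\bv_h\|_{1,h}}\;\geq\;\beta\,\|q_h\|_{0,\Omega},\qquad q_h\in\widetilde M_h.
\end{equation*}
This is exactly the mortar-type stability verified in \cite{BD98}: the normalization $\sum_{j\in\cS_i}A_{i,j}=1$, the positivity $A_{j,i}\geq 0$, and the hypothesis that each triangle has at least one interior vertex together allow one to construct a suitable test field in $\bV_h=[K^0_h]^2$ (the standard macroelement / partition-of-unity argument). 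For the second example one additionally composes with the map $I_h$ from the proof of Lemma \ref{lem4} modified so that its image lands in the interior-vertex part of $K_h$.

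With this inf-sup in hand, the saddle-point theory (as used in Lemmas \ref{lem2} and \ref{lem5}) furnishes a bounded projector $\bPi_h:\bH^1_0(\Omega)\to[W_h]^2$ with
\begin{equation*}
\int_\Omega \nabla_h\cdot\bPi_h\bu\,\mu_h\,d\bx=\int_\Omega \nabla\cdot\bu\,\mu_h\,d\bx,\qquad \mu_h\in\widetilde M_h\cap L^2_0(\Omega).
\end{equation*}
Its scalar version $\Pi_h:H^1_0(\Omega)\to W_h$ then satisfies the gradient-preservation identity $\int_\Omega \nabla_h\Pi_h v\cdot\boeta_h\,d\bx=\int_\Omega \nabla v\cdot\boeta_h\,d\bx$ for all $\boeta_h\in\widetilde{\bM}_h$, by the same integration-by-parts manipulation as in Lemmas \ref{lem3} and \ref{lem6}. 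Next I would construct $\bQ_h:\bH^1_0(\Omega)\to\bV_h$ exactly as in Theorem \ref{th1}: for the first example by the $L^2$-orthogonal projection onto $\bV_h$, and for the second example by the quasi-projection \eqref{quasi} defined through the biorthogonality relation, which is well-posed because $\dim\widetilde{\bM}_h=\dim\bV_h$. In either case the defining property gives $\int_\Omega \bQ_h\bpsi\cdot\bw_h\,d\bx=\int_\Omega \bpsi\cdot\bw_h\,d\bx$ for all $\bw_h\in\widetilde{\bM}_h$. Combining the two projectors yields
\begin{equation*}
b(\bQ_h\bpsi,\Pi_h v;\boeta_h)=\int_\Omega(\bQ_h\bpsi-\nabla_h\Pi_h v)\cdot\boeta_h\,d\bx=\int_\Omega(\bpsi-\nabla v)\cdot\boeta_h\,d\bx=b(\bpsi,v;\boeta_h)
\end{equation*}
for every $\boeta_h\in\widetilde{\bM}_h$, as required.

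The main obstacle is the inf-sup step. The modified functions $\{\phi_i\}_{i\in\cI}$ are \emph{not} members of $K^0_h$ because they incorporate boundary basis functions of $K_h$, so one cannot simply restrict the result of Lemma \ref{lem1} or \ref{lem4}. The delicate point is that the rotations $\bV_h$ now vanish on $\partial\Omega$ while the multipliers $\widetilde M_h$ still ``see'' boundary vertices through the coefficients $A_{j,i}$; the mortar trick of \cite{BD98} is exactly what reconciles this mismatch and restores both the inf-sup and the approximation property. Once that inf-sup is granted, the rest of the argument is formally identical to Section \ref{sec:ssbc}.
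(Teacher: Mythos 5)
Your proof reaches the right conclusion, but it takes a much heavier route than the paper, which disposes of Theorem \ref{th3} in one line: since $\widetilde \bM_h\subset\bM_h$, the identity of Theorem \ref{th1}, which holds for every $\boeta_h\in\bM_h$, holds in particular for every $\boeta_h\in\widetilde \bM_h$, so the projectors already constructed are simply reused. Measured against that, your ``main obstacle'' is a misdiagnosis: the divergence inf-sup you want for $\widetilde M_h$ is taken over the Crouzeix--Raviart space $[W_h]^2$ (that is what the broken norm and $\nabla_h\cdot$ in Lemmas \ref{lem1} and \ref{lem4} refer to, and what the Fortin construction of Lemma \ref{lem2} actually uses), and that space is the same for the simply supported and clamped problems; since $\widetilde M_h\subset K_h$ (each $\phi_i$ is a combination of the $\varphi_j$), the inf-sup for $\widetilde M_h$ follows by mere restriction of the multiplier space, with the same $\beta$ --- no mortar macroelement argument is needed, and $\Pi_h$ is literally the operator of Lemmas \ref{lem3} and \ref{lem6}. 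Where your extra care genuinely pays off is the operator $\bQ_h$: in the clamped case $\bV_h=[K_h^0]^2$ differs from the simply supported $[K_h]^2$, so the projection of Theorem \ref{th1} cannot be reused verbatim, and one must, as you do, define $\bQ_h$ as the quasi-projection onto $[K_h^0]^2$ determined by duality against $\widetilde \bM_h$, which is well posed precisely because $\dim\widetilde \bM_h=\dim\bV_h$ and Assumption \ref{A1A2}(ii) holds for the modified pair (the point of the \cite{BD98} construction). The paper's subset argument silently glosses over this step; your version fills that gap, at the cost of manufacturing difficulty where there is none in the $\Pi_h$ half.
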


\section{Positive-definite formulation}\label{sec:pd}
Now we give a positive definition formulation for our 
finite element scheme. Let 
$\bR_h:\bL^2(\Omega) \rightarrow \bM_h$ be the orthogonal projection 
defined as 
\[ \int_{\Omega} \bR_h \bv \cdot \boeta_h\,d\bx = \int_{\Omega} \bv \cdot \boeta_h\,d\bx,\quad 
\bv \in \bL^2(\Omega)\]
for both examples of the discrete Lagrange multiplier 
space and both types of boundary condition. Then the second equation 
of the discrete saddle point problem \eqref{dsaddle} can be written as 
\[ \bzeta_h = \frac{\lambda(1-t^2)}{t^2} \bR_h (\bphi_h-\nabla_h u_h).\]
Using this result in the first equation of  \eqref{dsaddle} 
the positive-definite formulation is to find 
$(\bphi_h,u_h) \in \bV_h \times W_h$   such that 
\begin{eqnarray*}
A(\bphi_h,u_h;\bpsi_h,v_h) = \ell(v_h),\quad 
(\bpsi_h,v_h) \in \bV_h \times W_h,
\end{eqnarray*}
where 
\begin{eqnarray*}
A(\bphi_h,u_h;\bpsi_h,v_h)  = 
\int_{\Omega} \CC \bepsilon(\bphi_h):\bepsilon(\bpsi_h)\,d\bx +
\lambda \int_{\Omega} (\bphi_h-\nabla u_h)\cdot(\bpsi_h-\nabla_h v_h)\,d\bx+\\
\frac{\lambda(1-t^2)}{t^2} 
 \int_{\Omega} (\bpsi_h-\nabla_h v_h)\cdot\bR_h (\bphi_h-\nabla_h u_h) \,d\bx.
\end{eqnarray*}
The disadvantage of this positive-definite system is that the action of $\bR_h$ cannot be 
efficiently computed. Now we present another way of 
getting a positive-definite form for the second example of the 
discrete Lagrange multiplier space, 
which can be efficiently computed. 
Note that  the two sets of  basis functions of $\bM_h$ and $\bV_h$ form 
a biorthogonal system for the second example.
 Then the Gram matrix $\tD$ associated with 
these two sets of basis functions will be diagonal. Then 
putting $v_h=0$ in the first equation of the saddle point 
system \eqref{dsaddle}, we have 
\begin{eqnarray*}
 \int_{\Omega} \CC \bepsilon(\bphi_h):\bepsilon(\bpsi_h)\,d\bx +
\lambda \int_{\Omega} (\bphi_h-\nabla_h u_h)\cdot \bpsi_h\,d\bx + 
\int_{\Omega} \bpsi_h\cdot\bzeta_h\,d\bx =0 .
\end{eqnarray*}
Note that the Gram matrix $\tD$ is associated with 
the inner product $\int_{\Omega} \bpsi_h\cdot\bzeta_h\,d\bx $.
Thus with  suitable choices of matrices $\tA$ and $\tB$, 
the algebraic form of this equation becomes 
\[ \tA \bphi_h + \tB^T u_h + \tD \bzeta_h = 0.\]
This equation can be solved for $\bzeta_h$ as 
\[ \bzeta_h = - \tD^{-1} \left(\tA \bphi_h + \tB^T u_h \right).\]
 Thus we can statically condense out 
the Lagrange multiplier $\bzeta_h$ from the saddle point system. 
This leads to a reduced and positive definite system. Hence  an 
efficient solution technique can be applied to 
solve the arising linear system.

\section{Conclusion}
We have presented a new finite element method 
for Reissner-Mindlin plate equations using 
nonconforming Crouzeix-Raviart finite element basis functions for 
the transverse displacement, and the standard linear finite element 
 for the rotation of the transverse normal vector.
 We have also shown  two examples of the discrete 
 Lagrange multiplier space for the presented 
 finite element approach. We note that the second 
 example for the discrete Lagrange multiplier space 
 provides a more efficient numerical method as 
 the degrees of freedom corresponding to 
 the Lagrange multiplier can be statically condensed out 
 from the system in this case just by inverting a diagonal matrix. 
 
\bibliographystyle{alpha}
\bibliography{total}
\end{document}